\newtheorem*{corollary*}{Corollary}
\newtheorem*{conjecture*}{Conjecture}
\newtheorem*{example*}{Example}
\newtheorem*{theorem*}{Theorem}
\newtheorem*{proposition*}{Proposition}
\newtheorem{theorem}{Theorem}[section]
\newtheorem{corollary}[theorem]{Corollary}
\newtheorem{lemma}[theorem]{Lemma}
\newtheorem*{claim*}{Claim}
\theoremstyle{definition}
\newtheorem{example}[theorem]{Example}
\theoremstyle{remark}
\numberwithin{equation}{section}
\renewcommand*\env@matrix[1][\
arraystretch]{%
  \edef\arraystretch{#1}%
  \hskip -\arraycolsep
  \let\@ifnextchar\new@ifnextchar
  \array{*\c@MaxMatrixCols c}}
\renewcommand{\mod}{\operatorname{mod}}
\newcommand{\proj}{\operatorname{proj}}
\newcommand{\Ext}{\operatorname{Ext}}
\newcommand{\gldim}{\operatorname{gldim}}
\newcommand{\End}{\operatorname{End}}
\newcommand{\pdim}{\operatorname{pdim}}
\newcommand{\gr}{\operatorname{grade}}
\newcommand{\Hom}{\operatorname{Hom}}
\newcommand{\add}{\operatorname{\mathrm{add}}}
\renewcommand{\top}{\operatorname{\mathrm{top}}}
\newcommand{\Gr}{\operatorname{Gr}}
\begin{document}

\title{On the interaction of the Coxeter transformation and the rowmotion bijection}
\date{\today}

\subjclass[2020]{Primary 16G10, 16E10, 05E16, 05E18}

\keywords{Distributive lattices, grade bijection, Coxeter transformation}

\author{Ren\'{e} Marczinzik}
\address{Mathematical Institute of the university of Bonn, University of Bonn, Endenicher Allee 60, 53115 Bonn, Germany}
\email{marczire@math.uni-bonn.de}
\author{Hugh Thomas}
\address{LACIM, PK-4211, Universit\'e du Qu\'ebec \`a Montr\'eal
CP 8888, Succ. Centre-ville
Montr\'eal (Qu\'ebec) H3C 3P8 Canada}
\email{thomas.hugh\_r@uqam.ca}
\author{Emine Y\i{}ld\i{}r\i{}m}
\address{The University of Cambridge, Department of Pure Mathematics and Mathematical Statistics, Centre for Mathematical Sciences, Wilberforce Road, Cambridge, CB3 0WB, United Kingdom}
\email{ey260@cam.ac.uk}

\begin{abstract}
Let $P$ be a finite poset and $L$ the associated distributive lattice of order ideals of $P$.
Let $\rho$ denote the rowmotion bijection of the order ideals of $P$ viewed as a permutation matrix and $C$ the Coxeter matrix for the incidence algebra $kL$ of $L$.
Then we show the identity $(\rho^{-1} C)^2=id$, as was originally conjectured by Sam Hopkins. Recently it was noted that the rowmotion bijection is a special case of the much more general grade bijection $R$ that exists for any Auslander regular algebra. This motivates to study the interaction of the grade bijection and the Coxeter matrix for general Auslander regular algebras. For the class of higher Auslander algebras coming from $n$-representation finite algebras we show that $(R^{-1} C)^2=id$ if $n$ is even and $(R^{-1}C+id)^2=0$ when $n$ is odd.
\end{abstract}

\maketitle

\section{Introduction}\label{sone}
Let $A$ be a finite dimensional algebra over a field $k$ with finite global dimension. We will always assume that $k$ is a splitting field for the algebra $A$, which for example is true if $k$ algebraically closed or if $A$ is a quiver algebra.
We denote the indecomposable projective $A$-modules by $P_i$ for $i=1,...,n$. Then the \emph{Cartan matrix} $M$ of $A$ is defined as the $n \times n$-matrix with entries $m_{i,j}:=\dim_k \Hom_A(P_i, P_j)$. The \emph{Coxeter matrix} (a.k.a. Coxeter transformation) $C$ of $A$ is then defined as $C:=-M^{-1} M^T$. Note that this is well defined as the Cartan matrix of an algebra of finite global dimension has determinant 1 or -1 and thus $M$ is invertible over $\mathbb{Z}$, see for example \cite[Proposition III.3.10]{ASS}

The Coxeter matrix of a finite dimensional algebra with finite global dimension is the main object of study in the spectral theory of finite dimensional algebras, we refer for example to the survey article \cite{LP}. Of special interest in homological algebra are algebras with periodic Coxeter matrix, that is $C^l=id$ for some $l \geq 1$. Algebras with periodic Coxeter matrix include for example fractionally Calabi-Yau algebras that arise in geometric and combinatorial contexts, we refer for example to \cite{CDIM} and \cite{Y}.

When $P$ is a finite poset then the set of order ideals of $P$ is a distributive lattice $L$ and every finite distributive lattice arises this way and is uniquely determined by $P$.
The Coxeter matrix of the incidence algebra $kL$ can be directly read off from elementary combinatorial data involving the M\"obius function of $L$.
When $S$ is an antichain of $P$ then set $I(S)$ to be the order ideal whose maximal elements are given by $S$ and set $M(S)$ to be the order ideal whose minimal non-elements are given by $S$. The \emph{rowmotion bijection} is given on the elements of $L$ by sending $I(S)$ to $M(S)$, which defines a bijection. The rowmotion bijection is one of the main attractions in dynamical algebraic combinatorics, we refer for example to the articles \cite{H}, \cite{Str} and \cite{TW}. When $L$ has $n$ elements, we can associate to the rowmotion bijection an $n \times n$-permutation matrix in a natural way.
While the Coxeter matrix and the rowmotion bijection are well studied objects in representation theory and combinatorics respectively, it seems that no relation between them has been shown before.

The following theorem, which is our first main result, was first conjectured by Sam Hopkins, who noted the identity in several examples:
\begin{theorem}
Let $L$ be a finite distributive lattice with $C$ the Coxeter matrix of the incidence algebra of $kL$ and $\rho$ the rowmotion bijection for $L$ viewed as a permutation matrix.
Then $\rho^{-1} C$ has order two, that is $(\rho^{-1} C)^2=id$.

\end{theorem}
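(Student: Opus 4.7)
The plan is to interpret the identity through the module theory of the incidence algebra $A=kL$. Two structural facts from prior work, both mentioned in the abstract, are central: the algebra $kL$ is Auslander regular, so there is a well-defined grade bijection $R$ on its simple modules; and under the natural indexing of simples by $L$ this grade bijection coincides with the rowmotion bijection $\rho$. The Coxeter matrix $C=-M^{-1}M^T$ is the $K_0$-shadow of Serre duality on the bounded derived category $D^b(A)$, sending $[P_x]$ to $-[I_x]$.

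First I would make precise the action of $C$ on the class of a simple $[S_x]$ using the Auslander regular structure. For each simple $S_x$ with grade $g_x=\min\{i:\Ext^i(S_x,A)\neq 0\}$, Auslander regularity guarantees that $\Ext^{g_x}(S_x,A)$ has a distinguished simple socle corresponding, under the standard $A\leftrightarrow A^{\mathrm{op}}$ identification of simples, to $S_{\rho(x)}$. This forces $C[S_x]$ to carry a leading term proportional to $[S_{\rho(x)}]$ with a sign of the form $(-1)^{g_x+1}$, plus a correction supported on simples of strictly larger grade. Second, I would rewrite $(\rho^{-1}C)^2=\mathrm{id}$ as $C\rho^{-1}C=\rho$ and expand it matrix-theoretically using $C=-M^{-1}M^T$ together with the fact that $M$ is the zeta matrix of $L$ and $M^{-1}$ the M\"obius matrix. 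The resulting statement is a combinatorial identity relating M\"obius values of $L$ to the rowmotion map, of the rough shape
\[
\sum_{k\leq_L i}\mu_L(\rho(k),j) \;=\; \sum_{m\geq_L i}\mu_L(j,\rho^{-1}(m))\qquad\text{for all } i,j\in L,
\]
and it suffices to prove such an identity.

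The main obstacle is precisely this combinatorial identity. A direct toggle-by-toggle argument is delicate because toggles distort M\"obius sums in a nonlocal way. My preferred strategy is conceptual: use the $\Ext$-duality of Auslander regular algebras, namely the natural isomorphism $\mathbf{R}\Hom(\mathbf{R}\Hom(-,A),A)\simeq\mathrm{id}$, applied to each simple $S_x$. Together with the identification of the grade bijection with $\rho$, this should produce a square-root symmetry between $C$ and $\rho$. The technical work is then to align the resulting signs, involving the parities $(-1)^{g_x}$, and the socle-versus-top identifications for $\Ext^{g_x}(S_x,A)$, with the Cartan/M\"obius form of $C$ used in the theorem. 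Carrying out this bookkeeping rigorously, and showing that the off-diagonal corrections in the two sides of $C\rho^{-1}C=\rho$ match termwise, is the heart of the proof.
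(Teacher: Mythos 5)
There is a genuine gap: your text is a plan that explicitly defers the decisive step. Both of your proposed routes end with an unproven claim --- the M\"obius-type identity $\sum_{k\leq i}\mu(\rho(k),j)=\sum_{m\geq i}\mu(j,\rho^{-1}(m))$ in the combinatorial reformulation, and the ``bookkeeping'' of signs and off-diagonal corrections in the duality reformulation --- and you yourself call this the heart of the proof. Moreover, the structural input you lean on is itself not established: the assertion that $C[S_x]$ has leading term $(-1)^{g_x+1}[S_{\rho(x)}]$ plus corrections supported on simples of strictly larger grade is exactly the kind of fine information that does not follow from Auslander regularity in the abstract. The biduality $\mathbf{R}\Hom(\mathbf{R}\Hom(-,A),A)\simeq\mathrm{id}$ descends to $K_0$ as a statement about the Cartan matrix alone (it is essentially the identity $C^{-1}=-M^{-T}M$ in disguise) and does not involve $\rho$ at all; the rowmotion matrix only enters once you know \emph{where} the groups $\Ext^i(S_x,A)$ (equivalently, the terms of the projective resolutions of the indecomposable injectives) sit and what their classes are. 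Without that, the correction terms in $C\rho^{-1}C=\rho$ have no reason to cancel, so the proposal does not close.

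The paper's proof supplies precisely this missing ingredient and then finishes in a few lines. It invokes the explicit minimal projective resolution of the indecomposable injective $J_{I(S)}$ over $kL$ (Theorem \ref{resn}, from \cite[Theorem 3.2]{IM}), whose $i$-th term is $\bigoplus_{T\subseteq S,\,|T|=i}P_{M(T)}$. This gives the closed formula
\[
C\bigl([P_{I(S)}]\bigr)=-\sum_{T\subseteq S}(-1)^{|T|}\,[P_{M(T)}]
\]
in the projective basis indexed by order ideals, with $\rho$ acting by $[P_{I(S)}]\mapsto[P_{M(S)}]$. Then $C\rho^{-1}C([P_{I(S)}])=\sum_{T\subseteq S}\sum_{R\subseteq T}(-1)^{|T|+|R|}[P_{M(R)}]$, and the inner alternating sums cancel by inclusion--exclusion except for $R=T=S$, giving $C\rho^{-1}C([P_{I(S)}])=[P_{M(S)}]$ and hence $(\rho^{-1}C)^2=\mathrm{id}$. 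If you want to salvage your approach, the realistic path is to prove (or quote) that resolution result rather than hope the identity falls out of abstract $\Ext$-duality; note also that you should fix a basis consistently --- the theorem's matrices are written in the basis of projectives indexed by $L$, whereas your computation of $C[S_x]$ works in the basis of simples, and the two matrices of $C$ differ by more than relabelling.
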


In the article \cite{IM} it was noted that the rowmotion bijection for a distributive lattice $L$ is a special case of a much more general bijection that exists for any Auslander regular algebra.
Recall here that a finite dimensional algebra $A$ is called \emph{Auslander regular} if $A$ has finite global dimension and in the minimal injective coresolution 
$$0 \rightarrow A \rightarrow I_0 \rightarrow I_1 \rightarrow \cdots I_n \rightarrow 0$$
we have that the projective dimension of $I_i$ is bounded by $i$ for all $i \geq 0$.
%In \cite{I} it was shown that the map $S \rightarrow \top(D \Ext_A^{g_S}(S,A))$ (where $g_S$ denotes the grade of $S$) is a bijection on the simple $A$-modules $S$ for an Auslander regular algebra, which is called the \emph{grade bijection} of the Auslander regular algebra $A$.
In \cite{I}, Iyama defined the so-called \emph{grade bijection} of an Auslander
regular algebra $A$. It is a permutation of the simple modules of $A$, whose precise definition we will recall in the next section. 
Again, we can associate a permutation matrix to the grade bijection that we will usually denote by $R_A$, or $R$ for short, for an Auslander regular algebra $A$.
One of the main results in \cite{IM} states that a finite lattice $L$ is distributive if and only if the incidence algebra $kL$ is Auslander regular. 
Moreover, when $L$ is distributive the grade bijection gives a homological realisation of the rowmotion bijection, when one identifies the elements of the lattice $L$ with the simple $kL$-modules in a natural way.
This leads to the natural question, whether for other Auslander regular algebras
there is a simple relation between the Coxeter matrix and the grade bijection.

We give a positive answer for an important class of algebras that also appears in combinatorics. Namely, an algebra $A$ is called $n$-representation-finite if $A$ has global dimension at most $n$ and there is an $n$-cluster tilting object $M$ in $\mod A$, which is then uniquely given when we assume that $M$ is basic. 
The notion of $n$-representation finite algebras was introduced in \cite{Iya2} and includes many important classes of algebras such as all path algebras of Dynkin type, higher Auslander algebras of Dynkin type $A$ that have strong relations to the combinatorics of cyclic  polytopes \cite{OT} and the 2-representation finite algebras that have strong relations to quivers with potential and Jacobian algebras \cite{HI}.  
For an $n$-representation finite $A$ with $n$-cluster tilting module $M$, the endomorphism algebra $B:=\End_A(M)$ will be an higher Auslander algebra. Higher Auslander algebras are those algebras $B$ with finite global dimension and a minimal injective coresolution 
$$0 \rightarrow B \rightarrow I_0 \rightarrow I_1 \rightarrow \cdots I_n \rightarrow 0$$ 
such that the projective dimension of $I_i$ is zero for $i=0,1,...,n-1$, when $n$ denotes the global dimension. Thus by definition higher Auslander algebras are always Auslander regular. Our second main result is as follows for such algebras $B$.
\begin{theorem}
Let $A$ be an $n$-representation finite algebra with $n$-cluster tilting module $M$.
Let $B=\End_A(M)$ with grade bijection $R$ and Coxeter matrix $C$.
If $n$ is even then $(R^{-1}C)^2=id$ and if $n$ is odd then $(R^{-1}C+id)^2=0$.
\end{theorem}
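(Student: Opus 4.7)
The plan is to analyse both the grade bijection $R$ and the Coxeter matrix $C$ of the higher Auslander algebra $B$ in terms of intrinsic invariants of the $n$-cluster tilting subcategory $\mathcal{M}=\add M$ of $\mod A$, and then to verify the desired identity orbit by orbit under the higher Auslander--Reiten translation $\tau_n$. By Iyama's higher Auslander--Reiten theory the simple $B$-modules $\{S_X\}$ are indexed by the indecomposable summands $X$ of $M$; the corresponding indecomposable projective and injective $B$-modules are $P_X=\Hom_A(M,X)$ and $I_X=D\Hom_A(X,M)$, the functor $\Hom_A(M,-)\colon \mathcal{M}\to\proj B$ is an equivalence, and $\tau_n$ gives a bijection from the non-$A$-projective indecomposables of $\mathcal{M}$ onto the non-$A$-injective ones.

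First, I would compute the grade bijection $R$. The $n$-almost split sequences in $\mathcal{M}$, pushed through $\Hom_A(M,-)$, assemble into minimal projective resolutions of the $S_X$ of length at most $n+1$. Applying $\Hom_B(-,B)$ to such a resolution and using the Nakayama equivalence on projectives, one identifies $\Ext^*_B(S_X,B)$ and reads off both $\gr(S_X)$ and the simple top of the lowest non-vanishing $\Ext$-group; this should describe $R$ as an explicit permutation $\sigma$ of the indecomposable summands of $M$ built from $\tau_n^{\pm 1}$ and the ordinary Nakayama permutation on the $A$-projective summands.

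Next, I would describe $C$. The Coxeter matrix represents the derived Nakayama functor on $K_0(B)$; on the other hand, Herschend--Iyama show that $\mathcal{M}$ is twisted fractionally Calabi--Yau and that the Serre functor of $D^b(\mod A)$ acts on $\mathcal{M}$ as $\tau_n[n]$. Transporting this through the equivalence $\Hom_A(M,-)\colon\mathcal{M}\to\proj B$ gives a description of $C$ as, up to the sign $(-1)^n$ coming from the shift $[n]$, a permutation matrix very close to the one underlying $R$, modified by controlled corrections at the projective and injective summands in $\mathcal{M}$ where the $\tau_n$-orbits begin and end.

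In the combination $R^{-1}C$ the main $\tau_n$-permutations cancel, leaving only the sign $(-1)^n$ times a boundary matrix whose support is concentrated at the endpoints of the (finitely many and finite-length) $\tau_n$-orbits. An orbit-wise check then yields the claimed identities: the boundary matrix is an involution for even $n$, giving $(R^{-1}C)^2=\mathrm{id}$, while for odd $n$ it has the form $-\mathrm{id}+N$ with $N^2=0$, giving $(R^{-1}C+\mathrm{id})^2=0$. The main obstacle is the delicate ``boundary'' analysis at summands of $M$ that are $A$-projective or $A$-injective: there the $\tau_n$-dynamics terminates, the grade stratification jumps, and the description of $C$ is not a bare permutation, so ensuring the clean cancellation with $R^{-1}$ and isolating the precise sign $(-1)^n$ is what drives the even/odd dichotomy.
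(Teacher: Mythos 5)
Your outline for the grade bijection $R$ coincides with the paper's: use the $n$-almost split sequences to get minimal projective resolutions of the simple $B$-modules, apply $\Hom_B(-,B)$, read off the grade (which is $0$ on $A$-projective summands and $n+1$ otherwise) and the top of the relevant $\Ext$-group; this gives $R(L_N)=L_{\nu(N)}$ for $N$ projective and $R(L_N)=L_{\tau_n(N)}$ otherwise, exactly as in the paper. The final cancellation you describe is also structurally the right one: $CR^{-1}$ (or $R^{-1}C$) acts as $-\mathrm{id}$ on the classes indexed by injective (resp.\ projective) summands and as $(-1)^n\mathrm{id}$ plus a correction landing in that span elsewhere, which yields an involution for $n$ even and $-\mathrm{id}+N$ with $N^2=0$ for $n$ odd.

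The genuine gap is in your treatment of $C$. The Coxeter matrix of $B$ is induced by the Nakayama functor $\nu_B$, i.e.\ $C([L_N])=-[T_N]$ with $T_N=D\Hom_A(N,M)$, and to express $[T_N]$ in the basis of indecomposable projective $B$-modules one needs an explicit $B$-projective resolution of $T_N$. Appealing to Herschend--Iyama's fractional Calabi--Yau property and to ``$\mathbb{S}\cong\tau_n[n]$ on $\mathcal{M}$'' conflates the Serre functor of $A$ with the Nakayama functor of $B$: transporting the former through $\Hom_A(M,-)\colon\add M\to\proj B$ does not by itself produce the matrix of $C$ in the basis $\{[L_X]\}$, and the phrase ``controlled corrections at the projective and injective summands'' is precisely where the missing computation sits. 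The paper supplies it in Lemma \ref{big}: for non-projective $N$ one applies $D\Hom_A(-,M)$ to the minimal $A$-projective resolution $0\to P_n\to\cdots\to P_0\to N\to 0$ and uses the higher Auslander--Reiten formula $\Ext^n_A(N,M)\cong D\Hom_A(M,\tau_n(N))$ to get a $B$-projective resolution of $T_N$ with projective-injective terms $D\Hom_A(P_i,M)$, whence $C([L_N])=\sum_{i=0}^{n}(-1)^{i+1}[L_{\nu(P_i)}]+(-1)^n[L_{\tau_n(N)}]$ (Theorem \ref{coxetercalc}). Note in particular that the correction term involves the entire $A$-projective resolution of $N$, so its column support is every non-projective summand and the $\nu(P_i)$ generally lie in $\tau_n$-orbits different from that of $N$; hence $R^{-1}C$ does not block-decompose by orbits and the verification cannot literally be done ``orbit by orbit'' --- though this does not matter, since the cancellation only needs that the corrections land in the span of the boundary classes. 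Without deriving the displayed formula (or an equivalent resolution statement), your plan asserts rather than proves the key input, and the fractional CY machinery you invoke is both heavier than necessary and insufficient as stated to close that step.
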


\section{Preliminaries}
We assume that algebras are finite dimensional over a field $k$ and that they are non-semisimple and connected unless stated otherwise. Additionally, we assume that $k$ is a splitting field, which is for example automatic if $k$ is algebraically closed or if $A$ is a quiver algebra. Here $k$ being a splitting field for the $k$-algebra $A$ means that every simple module $A$-module $S$ has the property that $\End_A(S) \cong k$, see for example \cite[Chapter 7]{Lam} for more equivalent characterisations and properties of splitting fields. We assume that modules are right modules unless otherwise stated. $D=\Hom_k(-,k)$ denotes the duality and $J$ denotes the Jacobson radical of an algebra. We assume that the reader is familiar with the basics of representation theory and homological algebra of finite dimensional algebras and we refer for example to the textbooks \cite{ARS} and \cite{SkoYam}.
Let $\nu_A:=D \Hom_A(-,A)$ denote the Nakayama functor of an algebra $A$ and $\nu_A^{-1}=\Hom_A(D(A),-)$ its inverse. It is well known that $\nu_A$ induces an equivalence between the category of projective $A$-modules and the category of injective $A$-modules with inverse $\nu_A^{-1}$.
The \emph{global dimension} of an algebra $A$ is defined as the supremum of all projective dimensions of $A$-modules. The \emph{dominant dimension} of an algebra $A$ with minimal injective coresolution 
$$0 \rightarrow A \rightarrow I_0 \rightarrow I_1 \rightarrow \cdots $$
is defined as the smallest $n \geq 0$ such that $I_n$ is not projective or as infinite if there is no such $I_n$. For an $A$-module $M$, $\add(M)$ will denote the full subcategory of $\mod A$ consisting of all direct summands of $M^n$ for some natural number $n$.
Let $K_0(A)$ denote the Grothendieck group of an algebra $A$ with finite global dimension with basis given by the indecomposable projective modules $[P]$. For simplicity, we will often omit the usual brackets $[P]$ for an element in the Grothendieck group $K_0(A)$ when there is no danger of confusion. When $A$ has finite global dimension, the \emph{Coxeter transformation} of $A$ is defined as $C_A([P]):=-[\nu_A(P)]$. When this linear transformation is expressed as a matrix with respect to the basis of $K_0(A)$ given by the classes of the indecomposable projective modules, we recover the matrix $C$ from Section \ref{sone}.

An algebra $A$ is called \emph{Auslander regular} when $A$ has finite global dimension and there exists an injective coresolution
$$0 \rightarrow A \rightarrow I^0 \rightarrow I^1 \rightarrow \cdots \rightarrow I^n \rightarrow 0$$
of the regular module such that $\pdim I^i \leq i$ for all $i \geq 0$.
Important classes of Auslander regular algebras are incidence algebras of distributive lattices (see \cite{IM}), higher Auslander algebras (see \cite{Iya4}) and blocks of category $\mathcal{O}$ (see \cite{KMM}).
The \emph{grade} of an $A$-module $M$ is defined by $\gr M:= \inf \{ i \geq 0 \mid \Ext_A^i(M,A) \neq 0 \}$. Dually, the \emph{cograde} of a module $M$ is defined as $\inf \{i \geq 0 \mid \Ext_A^i(D(A),M) \neq 0 \}$. For every Auslander regular algebra, there exists a bijection on the simple $A$-modules as follows:
\begin{theorem} (\cite[Theorem 2.10]{I}) \label{graderowmotiontheorem}
Let $A$ be an Auslander regular algebra.
Then there is a bijection $\Gr_A$ sending a simple $A$-module $S$ to the simple $A$-module $\Gr_A(S)=\top(D \Ext_A^{g_S}(S,A))$, where $g_S:= \gr S$.
The grade of $S$ equals the cograde of $\Gr_A(S)$.
\end{theorem}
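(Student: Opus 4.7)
The plan is to give explicit descriptions of both $R$ and $C$ on the Grothendieck group $K_0(B)$ and then reduce the identity to a matrix statement that I will verify using a higher-dimensional Serre duality.

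First I establish the action of the grade bijection $R$. Under the identification of simple $B$-modules with indecomposable summands of $M$, I claim that $R(S_N) = S_{\tau_n N}$ whenever $N$ is not $A$-projective, and $R(S_N) = S_{\nu_A N}$ whenever $N$ is $A$-projective (where $\nu_A$ is the Nakayama functor of $A$ and $\nu_A N \in \add M$ since $M$ is $n$-cluster tilting). To prove this I apply Iyama's formula $R(S) = \top D\Ext^{g_S}(S,B)$. For $N$ non-$A$-projective, the $n$-almost split sequence ending at $N$ gives, under $\Hom_A(M,-)$, a projective resolution of $S_N$ of length $n+1$; dualising this resolution computes $\Ext^{n+1}(S_N, B)$ and identifies its top as $S_{\tau_n N}$. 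For $N$ $A$-projective, $g_{S_N} = 0$ and the tensor-hom adjunction $D\Hom_A(N,M) \cong \Hom_A(M, \nu_A N)$ computes $\Hom_B(S_N, B)$ directly, with dual top $S_{\nu_A N}$.

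I then translate the theorem into a matrix identity. Writing $M_B$ for the Cartan matrix with entries $(M_B)_{ij} = \dim \Hom_A(M_i, M_j)$ and $P_\sigma$ for the permutation matrix of the bijection $\sigma$ on indecomposable summands of $M$ induced by $R$, one has $C_{[S]} = -M_B^{T} M_B^{-1}$ and $R_{[S]} = P_\sigma$ in the basis of simples. Setting $\Phi := M_B^{-1} P_{\sigma^{-1}} M_B^{T}$, a direct manipulation shows that $\Phi$ is conjugate to $-R^{-1}C$ and that $\Phi^{-1} = M_B^{-T} P_\sigma M_B$. The theorem thus reduces to the matrix identities $\Phi^2 = \mathrm{id}$ (when $n$ is even) or $(\Phi - \mathrm{id})^2 = 0$ (when $n$ is odd), equivalently $\Phi = \Phi^{-1}$ or $\Phi + \Phi^{-1} = 2\,\mathrm{id}$.

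To prove these matrix identities, I invoke the $n$-Auslander-Reiten formula for the $n$-cluster tilting subcategory $\add M$: for $X \in \add M$ non-$A$-projective and $Y \in \add M$, $D\Hom_A(X,Y) \cong \Ext^{n}_A(Y, \tau_n X)$, while for $X$ $A$-projective the classical Serre duality $D\Hom_A(X,Y) \cong \Hom_A(Y, \nu_A X)$ applies. Expanding $\Ext^n_A(Y, \tau_n X)$ through the $n$-almost split sequence converts both dualities into linear relations between entries of $M_B$; combined with the description of $\sigma$, these are precisely the matrix identities for $\Phi$. The sign $(-1)^n$ arising in the alternating sum coming from the $n$-almost split sequence is the only source of parity dependence and is what distinguishes the two cases. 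The main obstacle will be the coherent matching of the two different dualities along a single $\tau_n$-orbit: each orbit has a unique $A$-projective endpoint at which Serre duality replaces the $n$-AR formula, and the verification requires showing orbit by orbit that the boundary contributions fit together with the correct signs to produce the stated polynomial identity.
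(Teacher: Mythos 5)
There is a fundamental mismatch: your proposal does not address the statement you were asked to prove. The statement is Iyama's theorem for an arbitrary Auslander regular algebra $A$: that $S \mapsto \top(D\Ext_A^{g_S}(S,A))$ with $g_S = \gr S$ is a well-defined bijection on the simple $A$-modules, and that $\gr S$ equals the cograde of $R_A(S)$. Nothing in this statement involves $n$-cluster tilting modules, $n$-representation finite algebras, Cartan or Coxeter matrices. What you have written is instead a sketch of the later result on higher Auslander algebras $B = \End_A(M)$ (the identity $(R^{-1}C)^2 = id$ for $n$ even and $(R^{-1}C+id)^2 = 0$ for $n$ odd), and even there you explicitly \emph{invoke} ``Iyama's formula $R(S)=\top D\Ext^{g_S}(S,B)$'' --- i.e.\ you assume the very theorem to be proved. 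So as a proof of the stated theorem your argument is circular, and in addition it cannot possibly cover the general case: a general Auslander regular algebra (for instance the incidence algebra of a distributive lattice, the motivating example of the paper) is not of the form $\End_A(M)$ for a cluster tilting module, so no $n$-almost split sequences, $n$-Auslander--Reiten formulas, or $\tau_n$-orbits are available.

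What is actually required is the content of \cite[Theorem 2.10]{I}, which the paper does not reprove but cites: one must show that for a simple module $S$ over an Auslander regular (equivalently, $n$-Gorenstein for all $n$, of finite global dimension) algebra, the module $\Ext_A^{g_S}(S,A)$ has simple top, that applying the duality $\Ext^{g}(-,A)$ twice recovers $S$ (so the map is a bijection on isomorphism classes of simples), and that the grade/cograde symmetry holds. This rests on the Auslander condition controlling the grades of submodules and quotients of $\Ext$-modules (and on the behaviour of the double duality $\Ext^g(\Ext^g(S,A),A)$), none of which appears in your proposal. If your goal was the $n$-representation-finite theorem rather than this one, your outline is broadly in the spirit of the paper's Sections 4 arguments, but as an answer to the present statement it contains no proof at all.
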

We call the bijection $\Gr_A$ as in the previous theorem for Auslander regular algebras the \emph{grade bijection} on simple $A$-modules.
We refer to \cite[Theorem 2.10]{I} for a more general statement and proofs. 
In \cite{IM} it was shown that the grade bijection gives a categorification of the rowmotion map for incidence algebras of distributive lattices.
We define the grade bijection on the Grothendieck group of an Auslander regular algebra $A$ by $R_A([P_S])=[P_{\Gr_A(S)}]$, where $P_S$ denotes the projective cover of a simple module $S$. 
We define the \emph{rowmotion Coxeter transformation} of an Auslander regular algebra $A$ as $R_A^{-1} C_A$.  Note that since $R_A$ is a permutation matrix, we have $R_A^{-1}=R_A^T$. We will be mainly interested in the minimal polynomial of $R_A^{-1} C_A$ and the relation $(R_A^{-1} C_A)^2=id$. Note that since $  C_A R_A^{-1}= R_A (R_A^{-1} C_A) R_A^{-1}$, the operators $C_A R_A^{-1}$ and $R_A^{-1} C_A$ are similar and thus they have the same minimal polynomials and we have $(C_A R_A^{-1})^2=id$ if and only if $(R_A^{-1} C_A)^2=id$.

\section{The Coxeter transformation and the rowmotion bijection for distributive lattices}
Let $P$ denote a finite poset and $L$ the distributive lattice of order ideals of $P$.
Let $S$ be an antichain of $P$ and $I(S)$ the order ideal whose maximal elements are given by $S$ and $M(S)$ the order ideal whose minimal non-elements are given by $S$. Formally,

    \[I(S)=\{x\in P : x\leq s \text{ for some } s\in S\}. \]
    \[M(S)=P\setminus \{y\in P :y\geq s \text{ for some } s\in S\}.\]

The \emph{Hasse quiver} of a poset $P$ is the finite quiver with points $x \in P$ and an arrow $x \rightarrow y$ if $x$ covers $y$ (note that we use here the opposite convention compared to \cite{IM}). The \emph{incidence algebra} $kP$ of a poset $P$ over a field $k$ is defined as the quiver algebra $kQ/\mathcal I$ with $Q$ the Hasse quiver of $P$ and the relations $\mathcal I$ such that any two paths that start and end at the same points get identified.

The \emph{rowmotion bijection} $\rho$ for a distributive lattice $L$ given as the set of order ideals of a poset $P$ is defined as the permutation sending $I(S)$ to $M(S)$.
As explained in the preliminaries, the rowmotion bijection is a special case of the more general grade bijection when viewing the incidence algebra of a distributive lattice as an Auslander regular algebra. We will denote by $P_{O}$ the indecomposable projective module in the incidence algebra $kL$ of a distributive lattice $L$ corresponding to the order ideal $O$ and $J_O:=\nu_A(P_O)$ (we use $J$ instead of $I$ for the indecomposable injectives in this section to avoid confusion with $I(S)$). When $A=kL$ is the incidence algebra of a distributive lattice, we set $R_A([P_{I(S)}])=[P_{M(S)}]$ motivated by the fact that the grade bijection gives a homological realisation of the rowmotion bijection as explained after Theorem \ref{graderowmotiontheorem}.
We will use the following result, whose proof can be found in \cite[Theorem 3.2]{IM} and where one can also find a description of the differentials. See also \cite[Definition 2.3]{Y}, where this construction was anticipated, though restricted to a particular case.
\begin{theorem}\label{resn}
Let $L$ be a distributive lattice given as the set of order ideals of a poset $P$.
Then a minimal projective resolution of the indecomposable injective module $J_{I(S)}$ for an antichain $S$ is given as follows in $kL$:
$$0 \rightarrow P_{M(S)} \rightarrow \bigoplus_{T \subseteq S ,|T|=r-1}^{}{P_{M(T)}} \rightarrow \cdots \rightarrow \bigoplus_{T \subseteq S ,|T|=i}^{}{P_{M(T)}} \rightarrow \cdots \rightarrow \bigoplus_{T \subseteq S ,|T|=1}^{}{P_{M(T)}} \rightarrow P_{M(\emptyset)} \rightarrow J_{I(S)} \rightarrow 0.$$

\end{theorem}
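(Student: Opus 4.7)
The approach is to realize the displayed sequence as a Koszul-type (augmented-simplicial) resolution indexed by the subsets of the antichain $S$, and then verify the three requirements --- that it is a complex, is exact, and is minimal --- by reducing everything to combinatorial data on $L$.

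To build the differential I would fix a linear order on $S$; then for each pair $T'=T\setminus\{t\}\subsetneq T\subseteq S$, enlarging $T$ only tightens the defining constraint of $M(T)$, so $M(T)\subseteq M(T')$ in $L$ and there is a canonical non-zero morphism $\partial_{T,t}\colon P_{M(T)}\to P_{M(T')}$ (the relevant $\Hom$-space in the incidence algebra is one-dimensional). Summing the $\partial_{T,t}$ with the usual alternating Koszul signs produces $d$, and the identity $d^{2}=0$ reduces to the classical simplicial sign cancellation: the two composites $P_{M(T)}\to P_{M(T\setminus\{t,t'\})}$ (dropping $t$ first or $t'$ first) coincide because any two parallel paths in the incidence algebra of a poset are identified, and the alternating signs then pair them off.

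For exactness and the identification of the cokernel I would decompose each $P_{M(T)}$ along its natural basis indexed by lattice elements and examine the fiber of the whole complex at a fixed $Y\in L$. A direct combinatorial check from the definition of $M(T)$ shows that the summands $P_{M(T)}$ contributing at $Y$ are parameterized exactly by the subsets of a subset $S_Y\subseteq S$ determined by $Y$, and that the induced fiber differential is the augmented simplicial boundary of the full simplex on $S_Y$. Since the augmented chain complex of a non-empty simplex is acyclic, the fiber contributes nothing unless $S_Y=\emptyset$ --- a condition that unravels combinatorially to $Y$ lying in the support of $J_{I(S)}$. Summing over $Y$ gives vanishing higher homology and identifies the degree-zero cokernel with $J_{I(S)}$ on the nose. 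Minimality is then automatic: each $\partial_{T,t}$ is a non-identity path in the Hasse quiver of $L$ and so lies in $\rad(kL)$, and distinct $T\subseteq S$ give distinct $M(T)$ (as $S$ is an antichain) so no two summands share a top.

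The step I expect to be the main obstacle is the identification of the fiber at each $Y$ with the augmented simplicial chain complex: while parameterizing the contributing summands is easy, one must check carefully that the restricted differential is exactly the simplicial boundary with the correct signs. This in turn comes down to verifying that each canonical map $\partial_{T,t}$ acts as a signed identity on the one-dimensional fiber at a contributing $Y$, which follows from the one-dimensionality of the relevant Hom-spaces and from the parallel-path identification in $kL$.
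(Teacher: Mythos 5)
Your argument is essentially correct, and it is worth noting that the paper itself does not prove this statement at all: it is quoted from \cite[Theorem 3.2]{IM}, so there is no internal proof to compare with, and what you have written amounts to a self-contained direct proof of the cited result. The Koszul/simplicial strategy works: for $T'=T\setminus\{t\}$ one indeed has $M(T)\le M(T')$ in $L$, the space $\Hom(P_{M(T)},P_{M(T')})$ is one-dimensional and spanned by the canonical element $p$, products of canonical elements are again canonical elements (this is exactly the parallel-path identification), so the squares commute on the nose and the alternating signs give $d^2=0$. For exactness, restriction to a vertex $Y$ of $L$ is an exact functor that jointly detects vanishing, and the fibre of $P_{M(T)}$ at $Y$ is nonzero precisely when $Y\le M(T)$, i.e.\ $T\cap Y=\emptyset$, i.e.\ $T\subseteq S\setminus Y$; with the canonical bases each component map is the identity on every contributing fibre, so the fibre complex is exactly the augmented chain complex of the simplex on $S\setminus Y$, acyclic unless $S\subseteq Y$, which is equivalent to $Y\ge I(S)$. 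Two small points you should make explicit to close the argument: first, the identification of the cokernel with $J_{I(S)}$ as a \emph{module} (not merely its dimension vector) follows because $P_{M(\emptyset)}$ has one-dimensional fibres, so its submodules are spanned by fibres and the quotient supported on the filter above $I(S)$ with all induced structure maps nonzero is the interval module, which is the indecomposable injective $J_{I(S)}$; second, for minimality it suffices that every component $P_{M(T)}\to P_{M(T\setminus\{t\})}$ is a non-isomorphism between indecomposable projectives (here $M(T)\subsetneq M(T\setminus\{t\})$ because $S$ is an antichain), so the differentials land in the radical --- the ``distinct tops'' remark is not needed. With these details added, your proof is complete and, as far as can be judged from the reference, of the same explicit-differential nature as the one in \cite{IM}.
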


\begin{example} For an example of the previous theorem, consider the case where $P$ is the poset shown on the left in the diagram below. The lattice $L$ is
  the distributive lattice of order ideals of $P$, shown on the right.

  $$
  \vcenter{\hbox{\begin{tikzpicture}[-stealth,xscale=1.3]
\node(a) at (0,0) {$x_4$};
\node (b) at (-1,-1) {$x_1$};
\node (c) at (0,-1) {$x_2$};
\node (d) at (1,-1) {$x_3$};
\draw (a) -- (b); \draw (a)--(c); \draw (a)--(d);
  \end{tikzpicture}}}
  \qquad\qquad
\vcenter{\hbox{  
\begin{tikzpicture}[stealth-,xscale=1.3] \node(a) at (0,0) {$\emptyset$};
    \node (b) at (-1,1) {$\{x_1\}$};
    \node (c) at (0,1) {$\{x_2\}$};
    \node (d) at (1,1) {$\{x_3\}$};
    \node (e) at (-1,2) {$\{x_1,x_2\}$};
    \node (f) at (0,2) {$\{x_1,x_3\}$};
    \node (g) at (1,2) {$\{x_2,x_3\}$};
    \node (h) at (0,3) {$\{x_1,x_2,x_3\}$};
    \node (j) at (0,4) {$\{x_1,x_2,x_3,x_4\}$};
    \draw (a) -- (b);\draw (a)--(c);\draw (a)--(d);\draw (b)--(e);\draw (b)--(f);\draw (c)--(e);\draw (c)--(g);\draw
    (d)--(f);\draw (d)--(g);\draw (e)--(h);\draw (f)--(h);\draw (g) --(h);
    \draw (h)--(j);
    \end{tikzpicture}
    }}
    $$

  Let $S=\{x_1,x_2\}$. The projective resolution of $J_{I(S)}$ given by Theorem
  \ref{resn} is as follows, where we represent $kL$ modules by the corresponding representations of the Hasse quiver.

  $$ 0 \rightarrow \vcenter{\hbox{\begin{tikzpicture}[stealth-,scale=.7] \node(a) at (0,0) {$k$};
    \node (b) at (-1,1) {$0$};
    \node (c) at (0,1) {$0$};
    \node (d) at (1,1) {$k$};
    \node (e) at (-1,2) {$0$};
    \node (f) at (0,2) {$0$};
    \node (g) at (1,2) {$0$};
    \node (h) at (0,3) {$0$};
    \node (k) at (0,4) {$0$};
    \draw (a) -- (b);\draw (a)--(c);\draw (a)--(d);\draw (b)--(e);\draw (b)--(f);\draw (c)--(e);\draw (c)--(g);\draw
    (d)--(f);\draw (d)--(g);\draw (e)--(h);\draw (f)--(h);\draw (g) --(h);
    \draw (h) --(k);
    \end{tikzpicture}}}
 \rightarrow 
\vcenter{\hbox{  \begin{tikzpicture}[stealth-,scale=.7] \node(a) at (0,0) {$k$};
    \node (b) at (-1,1) {$k$};
    \node (c) at (0,1) {$0$};
    \node (d) at (1,1) {$k$};
    \node (e) at (-1,2) {$0$};
    \node (f) at (0,2) {$k$};
    \node (g) at (1,2) {$0$};
    \node (h) at (0,3) {$0$};
    \node (k) at (0,4) {$0$};
    \draw (a) -- (b);\draw (a)--(c);\draw (a)--(d);\draw (b)--(e);\draw (b)--(f);\draw (c)--(e);\draw (c)--(g);\draw
    (d)--(f);\draw (d)--(g);\draw (e)--(h);\draw (f)--(h);\draw (g) --(h);
    \draw (h) --(k);
  \end{tikzpicture}}}
  \oplus
\vcenter{\hbox{  \begin{tikzpicture}[stealth-,scale=.7] \node(a) at (0,0) {$k$};
    \node (b) at (-1,1) {$0$};
    \node (c) at (0,1) {$k$};
    \node (d) at (1,1) {$k$};
    \node (e) at (-1,2) {$0$};
    \node (f) at (0,2) {$0$};
    \node (g) at (1,2) {$k$};
    \node (h) at (0,3) {$0$};
    \node (k) at (0,4) {$0$};
    \draw (a) -- (b);\draw (a)--(c);\draw (a)--(d);\draw (b)--(e);\draw (b)--(f);\draw (c)--(e);\draw (c)--(g);\draw
    (d)--(f);\draw (d)--(g);\draw (e)--(h);\draw (f)--(h);\draw (g) --(h);
    \draw (h) --(k);
  \end{tikzpicture}}}
  \rightarrow
\vcenter{\hbox{   \begin{tikzpicture}[stealth-,scale=.7] \node(a) at (0,0) {$k$};
    \node (b) at (-1,1) {$k$};
    \node (c) at (0,1) {$k$};
    \node (d) at (1,1) {$k$};
    \node (e) at (-1,2) {$k$};
    \node (f) at (0,2) {$k$};
    \node (g) at (1,2) {$k$};
    \node (h) at (0,3) {$k$};
    \node (k) at (0,4) {$k$};
    \draw (a) -- (b);\draw (a)--(c);\draw (a)--(d);\draw (b)--(e);\draw (b)--(f);\draw (c)--(e);\draw (c)--(g);\draw
    (d)--(f);\draw (d)--(g);\draw (e)--(h);\draw (f)--(h);\draw (g) --(h);\draw(h)--(k);
   \end{tikzpicture}}}
   \rightarrow
 \vcenter{\hbox{   \begin{tikzpicture}[latex-,scale=.7] \node(a) at (0,0) {$0$};
    \node (b) at (-1,1) {$0$};
    \node (c) at (0,1) {$0$};
    \node (d) at (1,1) {$0$};
    \node (e) at (-1,2) {$k$};
    \node (f) at (0,2) {$0$};
    \node (g) at (1,2) {$0$};
    \node (h) at (0,3) {$k$};
    \node (k) at (0,4) {$k$};
    \draw (a) -- (b);\draw (a)--(c);\draw (a)--(d);\draw (b)--(e);\draw (b)--(f);\draw (c)--(e);\draw (c)--(g);\draw
    (d)--(f);\draw (d)--(g);\draw (e)--(h);\draw (f)--(h);\draw (g) --(h);\draw (h)--(k);
    \end{tikzpicture}}}
    \rightarrow 0$$
\end{example}
  
\begin{corollary}
Let $L$ be a distributive lattice given as the set of order ideals of a poset $P$.
Let $S$ denote an antichain of $P$.
Then the Coxeter transformation $C_A$ for $A=kL$ is given by 
$$C_A( [P_{I(S)}])=-\sum\limits_{T \subseteq S}^{}{ (-1)^{|T|} [ P_{M(T)}}].$$
\end{corollary}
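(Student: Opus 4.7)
The plan is to derive this as a direct consequence of Theorem \ref{resn} together with the definition of the Coxeter transformation. Recall that by definition $C_A([P]) = -[\nu(P)]$, where $\nu$ is the Nakayama functor, which sends $P_{I(S)}$ to the indecomposable injective $J_{I(S)}$. Hence the statement is equivalent to the identity
\[
[J_{I(S)}] \;=\; \sum_{T \subseteq S} (-1)^{|T|}\,[P_{M(T)}]
\]
in the Grothendieck group $K_0(A)$.

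To establish this identity I would simply take the alternating sum of the terms in the minimal projective resolution of $J_{I(S)}$ supplied by Theorem \ref{resn}. Setting $r = |S|$, that resolution has the form
\[
0 \to P_{M(S)} \to \bigoplus_{|T|=r-1} P_{M(T)} \to \cdots \to \bigoplus_{|T|=1} P_{M(T)} \to P_{M(\emptyset)} \to J_{I(S)} \to 0,
\]
so exactness in $K_0(A)$ yields
\[
[J_{I(S)}] \;=\; \sum_{i=0}^{r} (-1)^{i} \sum_{\substack{T \subseteq S \\ |T|=i}} [P_{M(T)}] \;=\; \sum_{T \subseteq S} (-1)^{|T|}\,[P_{M(T)}].
\]
Multiplying by $-1$ and invoking $C_A([P_{I(S)}]) = -[J_{I(S)}]$ gives the claimed formula.

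There is no real obstacle here: once Theorem \ref{resn} is in hand, the corollary is a one-line bookkeeping calculation. The only small point worth checking is that the indexing of indecomposable projectives by order ideals $I(S)$ is indeed a reparametrisation via antichains $S$, so that every indecomposable projective appears in the sum on the left-hand side exactly as claimed, and that $\nu(P_{I(S)}) = J_{I(S)}$ under the chosen convention identifying projectives and injectives by the support of their socle/top along the Hasse quiver.
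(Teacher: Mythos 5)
Your proof is correct and matches the paper's intended argument: the corollary is stated there without proof precisely because it is the immediate alternating-sum consequence of Theorem \ref{resn} together with $C_A([P])=-[\nu(P)]$ and the paper's convention $J_{I(S)}:=\nu(P_{I(S)})$, which is exactly the computation you carry out.
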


\begin{theorem}
Let $A=kL$ be the incidence algebra of a distributive lattice $L$ given as the set of order ideals of a poset $P$.
Then $(R_A^{-1} C_A)^2=id$.

\end{theorem}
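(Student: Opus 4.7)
The plan is a direct computation on the basis of $K_0(A)$ indexed by antichains of $P$, using the Corollary together with the definition $R_A([P_{I(S)}])=[P_{M(S)}]$.

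Since the order ideals of $P$ are in bijection with the antichains of $P$ via $S\mapsto I(S)$, the classes $\{[P_{I(S)}] : S \text{ an antichain of }P\}$ form a basis of $K_0(A)$, and inverting $R_A$ on the basis $\{[P_{M(S)}]\}$ gives $R_A^{-1}([P_{M(S)}])=[P_{I(S)}]$. Combining this with the formula for $C_A$ from the previous Corollary and noting that every subset $T\subseteq S$ of an antichain is again an antichain, I would first establish the closed form
\begin{equation*}
F([P_{I(S)}]) \;=\; R_A^{-1}C_A([P_{I(S)}]) \;=\; -\sum_{T\subseteq S}(-1)^{|T|}[P_{I(T)}].
\end{equation*}
The virtue of writing $F$ this way is that $F$ preserves the subspace spanned by $\{[P_{I(T)}] : T\subseteq S\}$ for each fixed $S$, so we can verify $F^2=id$ entirely inside the Boolean lattice of subsets of $S$, independently of the ambient lattice $L$.

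Next I would iterate. Applying $F$ again and swapping the order of summation,
\begin{equation*}
F^2([P_{I(S)}]) \;=\; \sum_{T\subseteq S}(-1)^{|T|}\sum_{U\subseteq T}(-1)^{|U|}[P_{I(U)}]
\;=\; \sum_{U\subseteq S}(-1)^{|U|}[P_{I(U)}]\sum_{U\subseteq T\subseteq S}(-1)^{|T|}.
\end{equation*}
The inner sum equals $(-1)^{|U|}\sum_{V\subseteq S\setminus U}(-1)^{|V|}$, which is $0$ unless $U=S$, in which case it equals $(-1)^{|S|}$. Hence only the $U=S$ term survives, giving $F^2([P_{I(S)}])=(-1)^{|S|}\cdot(-1)^{|S|}[P_{I(S)}]=[P_{I(S)}]$, as required.

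Because the whole argument reduces to one inclusion-exclusion cancellation on each Boolean interval, I do not expect a serious obstacle here; the only things that need checking are the two book-keeping points that (i) the indecomposable projectives are genuinely indexed by antichains so that the Corollary applies to every basis vector, and (ii) the intermediate vectors produced by $F$ stay within the span of $[P_{I(T)}]$'s so that the second application of $F$ is again controlled by the Corollary. Both follow immediately from the observation that arbitrary subsets of an antichain are antichains.
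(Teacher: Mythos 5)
Your proposal is correct and is essentially the paper's own argument: both rest on the Corollary expressing $C_A([P_{I(S)}])$ as an alternating sum over subsets $T\subseteq S$, the fact that $R_A$ exchanges $[P_{I(T)}]$ and $[P_{M(T)}]$, and the same Boolean inclusion--exclusion cancellation in which only the term $T=S$ (resp.\ $U=S$) survives. The only cosmetic difference is that you compute $(R_A^{-1}C_A)^2$ directly from a closed form for $R_A^{-1}C_A$, while the paper computes $C_AR_A^{-1}C_A([P_{I(S)}])=[P_{M(S)}]$ and then applies $R_A^{-1}$.
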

\begin{proof}
We first look at $C_A R_A^{-1} C_A$ and determine the values on a basis:
\begin{eqnarray*} C_A R_A^{-1} C_A ([P_{I(S)}])&=& C_A R_A^{-1} (-\sum\limits_{T \subseteq S}^{}{ (-1)^{|T|} [ P_{M(T)}}])\\
&=& C_A ( -\sum\limits_{T \subseteq S}^{}{ (-1)^{|T|} [ P_{I(T)}}])\\
&=& \sum\limits_{T \subseteq S}^{}{ \sum\limits_{R \subseteq T}^{}{ (-1)^{|T|} (-1)^{|R|} [P_{M(R)}]}}\\
&=&\sum_{R\subseteq S} \sum_{R\subseteq T\subseteq S}(-1)^{|T|-|R|}[P_{M(R)}] \\
&=&\sum_{R\subseteq S} [P_{M(R)}]\sum_{R\subseteq T\subseteq S}(-1)^{|T|-|R|} 
\end{eqnarray*}
%\old{Now we consider all ways to obtain a particular $R$ as in the sum.}
If $R$ is properly contained in $S$, the inner sum is over the $2^{|S|-|R|}$ choices for $T$, half with $|T|-|R|$ even, and half with $|T|-|R|$ odd. The inner sum is therefore zero in this case.
%\old{These terms therefore cancel.} 
The only surviving term of the outer sum is $R=S$, in which case we also have $T=S$.
The result is that $C_A R_A^{-1} C_A([P_{I(S)}])=[P_{M(S)}]$ and if we now apply $R_A^{-1}$ to both sides, we get that $R_A^{-1} C_A R_A^{-1} C_A ([P_{I(S)}])=[P_{I(S)}]$, as desired.
\end{proof}

The next example shows that the identity $(R_A^{-1}C_A)^2=id$, which is true when $A$ is the incidence algebra of a distributive lattices, does not hold for Auslander regular incidence algebras of posets that are not lattices:
\begin{example} \label{examplenonlattice}
The following poset with incidence algebra $A$ is Auslander regular:
\begin{center}
\begin{tikzpicture}[>=latex,line join=bevel,]
\node (node_0) at (21.0bp,6.5bp) [draw,draw=none] {$6$};
  \node (node_1) at (6.0bp,55.5bp) [draw,draw=none] {$4$};
  \node (node_2) at (36.0bp,55.5bp) [draw,draw=none] {$5$};
  \node (node_3) at (6.0bp,104.5bp) [draw,draw=none] {$2$};
  \node (node_4) at (36.0bp,104.5bp) [draw,draw=none] {$3$};
  \node (node_5) at (21.0bp,153.5bp) [draw,draw=none] {$1$};
  \draw [black,<-] (node_0) ..controls (17.092bp,19.746bp) and (13.649bp,30.534bp)  .. (node_1);
  \draw [black,<-] (node_0) ..controls (24.908bp,19.746bp) and (28.351bp,30.534bp)  .. (node_2);
  \draw [black,<-] (node_1) ..controls (6.0bp,68.603bp) and (6.0bp,79.062bp)  .. (node_3);
  \draw [black,<-] (node_1) ..controls (13.953bp,68.96bp) and (21.16bp,80.25bp)  .. (node_4);
  \draw [black,<-] (node_2) ..controls (28.047bp,68.96bp) and (20.84bp,80.25bp)  .. (node_3);
  \draw [black,<-] (node_2) ..controls (36.0bp,68.603bp) and (36.0bp,79.062bp)  .. (node_4);
  \draw [black,<-] (node_3) ..controls (9.9083bp,117.75bp) and (13.351bp,128.53bp)  .. (node_5);
  \draw [black,<-] (node_4) ..controls (32.092bp,117.75bp) and (28.649bp,128.53bp)  .. (node_5);
\end{tikzpicture}
\end{center}
Let $P_i$ denote the indecomposable projective $A$-modules and $I_i$ the indecomposable injective $A$-modules.
  A computer program such as the GAP-package \cite{QPA} can be used to show that $A$ is Auslander regular and to find the grade bijection and Coxeter matrix.  We just give a sketch for the calculations without all details in the following. One finds the injective resolution of each indecomposable projective, and then checks the projective dimension of the injectives that appear. The minimal injective resolutions of indecomposable projectives are given below.

\[ 0\rightarrow P_1 \rightarrow I_6  \rightarrow 0, \]

\[ 0\rightarrow P_2 \rightarrow I_6  \rightarrow I_3 \rightarrow 0, \]

\[ 0\rightarrow P_3 \rightarrow I_6  \rightarrow I_2 \rightarrow 0, \]

\[ 0\rightarrow P_4 \rightarrow I_6  \rightarrow I_5 \rightarrow 0, \]

\[ 0\rightarrow P_5 \rightarrow I_6  \rightarrow I_4 \rightarrow 0, \]
      and
      
\[0\rightarrow P_6 \rightarrow I_6 \rightarrow I_4 \oplus I_5 \rightarrow I_2 \oplus I_3 \rightarrow I_1 \rightarrow 0.\]

Since the algebra is isomorphic to its opposite algebra, one can dually obtain the minimal projective resolution of the indecomposable injectives and from that the projective dimensions of the indecomposable injectives to see that $A$ is indeed Auslander regular.

We emphasize that $R_A$ here is given by the grade bijection. Currently, no purely combinatorial description of this bijection is available for Auslander regular incidence algebras of posets except in the case that the poset is a distributive lattice. Such a combinatorial description would be a generalization of rowmotion on distributive lattices, and would be very interesting to have.
In forthcoming work \cite{KMT} we show that the grade bijection $R_A$ of a general Auslander regular algebra $A$ is given by sending $P_i$ to the last term in the minimal projective resolution of $\nu_A(P_i)=I_i$, which allows to calculate the grade bijection directly from the minimal projective resolutions of the $I_i$ as above.

The matrix for $R_A$ is given as follows:
$$\left(\begin{array}{cccccc}
       0 & 0& 0&0&0&1\\
       0 & 0& 1&0&0&0\\
       0 & 1& 0&0&0&0\\
       0 & 0& 0&0&1&0\\
       0 & 0& 0&1&0&0\\
       1 & 0& 0&0&0&0\\
\end{array}\right)$$
\noindent
and the Coxeter matrix $C_A$ is given as follows:
$$\left(\begin{array}{rrrrrr}
-1 & -1& -1&-1&-1&-1\\
  1 & 0& 1&0&0&0\\
1 & 1& 0&0&0&0\\
-1 & 0& 0&0&1&0\\
-1 &0& 0&1&0&0\\
1 & 0& 0&0&0&0\\
\end{array}\right).$$

The matrix $R_A^{-1} C_A $ has minimal polynomial $x^3-x^2-x+1$ and thus the identity $(R_A^{-1} C_A)^2=id$ is not true for this poset.

\end{example}

\section{$n$-representation finite algebras}
We recall some basics on cluster tilting modules and higher Auslander algebras.
An $A$-module $M$ is called \emph{$n$-cluster-tilting} when $M$ is a generator-cogenerator and $M^{ \perp n}=\add(M)=^{\perp n}M$, where $M^{ \perp n}= \{ N \in \mod A | \Ext_A^i(M,N)=0 $ for all $1 \leq i < n \}$ and $^{\perp n}M= \{ N \in \mod A | \Ext_A^i(N,M)=0 $ for all $1 \leq i < n \}$. A 1-cluster tilting module $M$ exists if and only if $A$ is representation-finite; in this case $\add(M)=\mod A$.
By the higher Auslander correspondence, see for example \cite{Iya4}, $M$ is $n$-cluster tilting if and only if the algebra $B:=\End_A(M)$ is a \emph{higher Auslander algebra} of global dimension $n+1$, that is it has global dimension and dominant dimension equal to $n+1$. We denote by $\tau_n:=\tau \Omega^{n-1}$ for $n \geq 1$ the \emph{$n$-Auslander--Reiten translate} and by $\tau_n^{-1}:= \Omega^{-(n-1)}\tau^{-1}$ the \emph{inverse $n$-Auslander--Reiten translate}.
Let $A$ be an algebra and let $M$ be a generator-cogenerator of $\mod A$ and $B:=\End_A(M)$. In the following, we will summarize several properties of $B$ in this situation and refer for example to \cite[Chapter VI.5]{ARS} for more information. There is an equivalence of categories $\add(M) \cong \proj B$, given by $\Hom_A(M,-)$. We denote the indecomposable projective $B$-module associated to an indecomposable $A$-module $N \in \add(M)$ by $L_N:=\Hom_A(M,N)$. Note that $\nu_B(L_N)=D \Hom_B(L_N,B)=D \Hom_B(\Hom_A(M,N), \Hom_A(M,M)) \cong D \Hom_A(N,M)$ and thus the indecomposable injective $B$-modules are given by $T_N := D \Hom_A(N,M)$ for an indecomposable module $N \in \add(M)$. %Furthermore an indecomposable module of the form $L_N$ is injective if and only if $N$ is injective and a module of the form $T_N$ is projective if and only if $N$ is projective.
We denote by $S_N$ the simple $B$-module with projective cover $L_N$.
In this section the modules $P_i$ will refer to terms in a minimal projective resolution of a module and not to the indecomposable projective modules of an algebra corresponding.

An algebra $A$ is called \emph{$n$-representation-finite} for some $n \geq 1$ if $\gldim A \leq n$ and there is an $n$-cluster tilting module $M$ in $\mod A$, see for example \cite{Iya2} where such algebras were studied systematically for the first time.
Note that an $n$-cluster tilting module $M$ in a $n$-representation-finite algebra is unique when we assume that $M$ is basic. Note also that $M$
necessarily contains the indecomposable projective and injective modules as
direct summands. 
$1$-representation-finite algebras are exactly the representation finite hereditary algebras. They are classified by Dynkin diagrams, see 
\cite[Chapter VIII]{ARS}.

We collect several results that we will need. For a survey on $n$-cluster tilting categories and higher Auslander algebras, we refer to Section 2 of \cite{Iya5}. For the definition
of $n$-almost split sequences and their basic properties we refer to Section 2.3 in \cite{Iya5}, which also contains the next lemma. We recall that the length of an $n$-almost split sequence is equal to $n+2$.
\begin{lemma}
  Let $A$ be an algebra with $n$-cluster-tilting module $M$ and $B:=\End_A(M)$.
  \begin{enumerate} \item
Let $N$ be an indecomposable non-projective
summand of $M$. There is an $n$-almost split sequence
$$ 0 \rightarrow \tau_n(N) \rightarrow \dots \rightarrow N \rightarrow 0.$$
Applying $\Hom_A(M,-)$ to it yields a minimal projective resolution of $S_N$:
\begin{align}\label{SPres}
  0\rightarrow L_{\tau_n(N)} \rightarrow \cdots \rightarrow L_N
  \rightarrow S_N\rightarrow 0.\end{align}
\item Let $N$ be an indecomposable non-injective summand of $M$. There
  is an $n$-almost split sequence
  $$ 0 \rightarrow N \rightarrow \dots \rightarrow \tau_n^{-1}(N)
  \rightarrow 0.$$
  Applying $D\Hom_A(-,M)$, we obtain a minimal injective coresolution of
  $S_N$:
  \begin{align}\label{SIres}
  0\rightarrow S_N \rightarrow T_N
  \rightarrow \cdots \rightarrow T_{\tau_n^{-1}(N)}\rightarrow 0.\end{align}
\end{enumerate}
  \end{lemma}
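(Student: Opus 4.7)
The plan is to leverage two pieces of higher Auslander-Reiten theory from section 2.3 of \cite{Iya5}: the existence of $n$-almost split sequences in the $n$-cluster tilting subcategory $\add(M) \subseteq \mod A$, together with the standard categorical equivalences $\Hom_A(M, -) \colon \add(M) \xrightarrow{\sim} \proj B$ and $D\Hom_A(-, M) \colon \add(M) \xrightarrow{\sim} \inj B$. With these in hand, each part of the lemma reduces to checking exactness and minimality of the resulting complex over $B$.

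For part (1), I would begin by invoking the $n$-almost split sequence
$$0 \to \tau_n(N) \to E_{n-1} \to \cdots \to E_0 \to N \to 0$$
with all terms in $\add(M)$, and split it into short exact sequences via its consecutive syzygies $K_i$. Applying $\Hom_A(M,-)$ to each short exact sequence and exploiting the $n$-cluster tilting vanishing $\Ext_A^i(M, X) = 0$ for $X \in \add(M)$ and $1 \leq i \leq n-1$, the resulting long exact Ext sequences splice together into an exact complex of projective $B$-modules terminating in $\Hom_A(M, E_0) \to \Hom_A(M, N)$. To identify the cokernel of this last map with $S_N$ I would invoke the right almost split property: every non-isomorphism $M' \to N$ with $M' \in \add(M)$ factors through $E_0 \to N$, so the image equals $\rad L_N$ and the cokernel is $L_N/\rad L_N = S_N$. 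Minimality of the resolution then follows because all maps in an $n$-almost split sequence lie in the radical of $\add(M)$ and hence correspond under the equivalence to maps in $\rad B$.

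Part (2) I would obtain as the formal dual. One option is to apply $D\Hom_A(-, M)$ directly to the $n$-almost split sequence starting at $N$, recognising the terms via $D\Hom_A(X, M) = \nu_B \Hom_A(M, X) = T_X$ for $X \in \add(M)$ as indecomposable injective $B$-modules. Alternatively, one may pass to $A^{\op}$: the duality $D$ sends $M$ to an $n$-cluster tilting $A^{\op}$-module and swaps non-projectives with non-injectives, converting the starting $n$-almost split sequence into one ending at $D(N)$, so that applying part (1) over $A^{\op}$ and then dualising back yields the claimed injective coresolution of $S_N$.

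The hard part is really absorbed into the cited existence theorem for $n$-almost split sequences. Once that is in hand, the one genuine technical step is the exactness of the Hom-ified complex, which rests entirely on the Ext-vanishing $\Ext_A^i(M, \add(M)) = 0$ for $1 \leq i \leq n-1$. Identification of the cokernel as the simple top and the minimality check then follow essentially formally from the almost split property and the radical-minimality of the morphisms in the $n$-almost split sequence.
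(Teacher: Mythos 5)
Your proposal is correct, but note that the paper itself gives no argument for this lemma: it is quoted directly from Section 2.3 of Iyama's \emph{Auslander--Reiten theory revisited}, where it is essentially built into the definition of an $n$-almost split sequence (the defining property is precisely that the induced complexes of $\Hom$-functors give minimal projective, respectively injective, resolutions of the simple objects, with the differentials lying in the radical of $\add(M)$). What you have done is reconstruct the standard derivation of this from the module-level exactness of the $n$-almost split sequence together with the $n$-cluster-tilting vanishing, and the reconstruction is sound. Two small points you leave implicit but which go through: the syzygies $K_i$ of the $n$-almost split sequence are not themselves in $\add(M)$, so the exactness of the spliced complex needs a dimension-shift along the short exact sequences, pushing $\Ext^1_A(M,K_i)$ up to some $\Ext^j_A(M,\tau_n(N))$ with $1\leq j\leq n-1$, which vanishes since $\tau_n(N)\in\add(M)$; and for the cokernel identification you should also record the easy converse inclusion, namely that the image of $\Hom_A(M,E_0)\rightarrow\Hom_A(M,N)$ lies in $\rad L_N$ because the right almost split map $E_0\rightarrow N$ is not a split epimorphism. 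Your treatment of minimality and of part (2), either by dualizing directly with $D\Hom_A(-,M)$ or by passing to $A^{\op}$, matches the cited source.
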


  As an immediate consequence of this lemma, we deduce that if $N$ is
  an indecomposable summand of $M$ which is not projective, then the
  projective dimension of $S_N$ is $n+1$; similarly, if $N$ is not
  injective, then the injective dimension of $S_N$ is $n+1$.

\begin{lemma} \label{higherausreiformulas}
Let $A$ be an $n$-representation-finite algebra with $n$-cluster tilting module $M$. Let $X,Y \in \add(M)$.
\begin{enumerate}
\item $\Hom_A(\tau_n^{-1}(Y),X)=D \Ext_A^n(X,Y)$.
\item $\Hom_A(Y,\tau_n(X))=D \Ext_A^n(X,Y)$.
\end{enumerate}
\end{lemma}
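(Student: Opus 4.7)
The plan is to establish both formulas via Iyama's higher Auslander--Reiten duality, i.e.\ by combining the classical Auslander--Reiten formula with dimension shifting inside the $n$-cluster tilting subcategory $\add(M)$.

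For part (2), I would first use the projective resolution of $X$ to write $\Ext^n_A(X,Y) \cong \Ext^1_A(\Omega^{n-1} X, Y)$, noting that projective summands of $\Omega^{n-1} X$ are irrelevant for $\Ext^1$. Applying the classical AR formula $D\Ext^1_A(U,V) \cong \overline{\Hom}_A(V,\tau U)$ with $U = \Omega^{n-1} X$ and $V = Y$ then yields
\[ D\Ext^n_A(X,Y) \cong \overline{\Hom}_A(Y, \tau\Omega^{n-1} X) = \overline{\Hom}_A(Y, \tau_n X), \]
where $\overline{\Hom}$ denotes morphisms modulo those factoring through an injective. The main obstacle is upgrading this stable Hom to ordinary Hom. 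If $X$ is projective then $\tau_n X = 0$ and both sides vanish, so assume $X$ is non-projective; by the standard closure properties of $n$-cluster tilting subcategories we then have $\tau_n X \in \add(M)$. The key step is to show that any morphism $Y \to \tau_n X$ factoring through an injective is zero, which I would deduce from the defining vanishing $\Ext^j_A(Z,Z') = 0$ for $Z,Z' \in \add(M)$ and $0 < j < n$, together with the fact that $\add(M)$ contains all injective $A$-modules (so the intermediary injective may be taken inside $\add(M)$).

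Part (1) follows dually. Starting from $D\Ext^1_A(U,V) \cong \underline{\Hom}_A(\tau^{-1} V, U)$ applied to $U = \Omega^{n-1} X$ and $V = Y$ gives $D\Ext^n_A(X,Y) \cong \underline{\Hom}_A(\tau^{-1} Y, \Omega^{n-1} X)$; a shift-isomorphism in the stable category, legitimate on $\add(M)$ because iterated (co)syzygies of objects in $\add(M)$ again lie in $\add(M)$, rewrites this as $\underline{\Hom}_A(\Omega^{-(n-1)} \tau^{-1} Y, X) = \underline{\Hom}_A(\tau_n^{-1} Y, X)$. The promotion to ordinary Hom mirrors the argument for part (2), now using that $\tau^{-1}(-)$ produces no injective summands to rule out non-zero morphisms $\tau_n^{-1} Y \to X$ factoring through projectives. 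Alternatively, one could bypass the stable category entirely by applying $\Hom_A(Y,-)$, respectively $\Hom_A(-,X)$, directly to the $n$-almost split sequences of the preceding lemma and collapsing the resulting long exact $\Ext$-sequence using the same middle-vanishing on $\add(M)$; this is more elementary but requires more bookkeeping of the short exact sequences into which the $n$-almost split sequence splits.
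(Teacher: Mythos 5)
Your opening moves are fine: dimension shifting plus the classical Auslander--Reiten formula does give the \emph{stable} form of higher AR duality, $D\Ext^n_A(X,Y)\cong\overline{\Hom}_A(Y,\tau_n X)\cong\underline{\Hom}_A(\tau_n^{-1}Y,X)$, which is exactly what the paper imports wholesale from \cite[Theorem 2.3.1]{Iya1}. The genuine gap is in your de-stabilization step. You claim that every morphism $Y\to\tau_n X$ factoring through an injective vanishes, and propose to deduce this from (i) the vanishing $\Ext^j_A(Z,Z')=0$ for $Z,Z'\in\add(M)$, $0<j<n$, and (ii) the fact that all injectives lie in $\add(M)$. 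These two ingredients cannot suffice: they hold verbatim for \emph{any} algebra with an $n$-cluster tilting module, for instance a self-injective algebra such as a preprojective algebra of Dynkin type with its $2$-cluster tilting module, and there the conclusion fails --- take $Y$ projective-injective, so that every map out of $Y$ factors through an injective and $\overline{\Hom}_A(Y,\tau_2 X)=0$, while $\Hom_A(Y,\tau_2 X)\neq 0$ whenever $\tau_2 X\neq 0$. Already for $n=1$ your condition (i) is vacuous, yet the statement for hereditary algebras genuinely uses heredity. The ingredient you never invoke, and cannot avoid, is the bound $\gldim A\le n$ contained in $n$-representation-finiteness: it yields $\tau_n X\cong D\Ext^n_A(X,A)$ on the nose and $\Hom_A(I,\tau_n X)=0$ for every injective $I$ (dually $\Hom_A(\tau_n^{-1}Y,P)=0$ for every projective $P$), which is precisely \cite[Lemma 2.4(d)]{Iya2}, the result the paper cites to pass from stable to ordinary $\Hom$. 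Without an argument of this kind your key step is unsupported.

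Two smaller problems in the same vein: in part (1) the ``shift-isomorphism'' $\underline{\Hom}_A(\tau^{-1}Y,\Omega^{n-1}X)\cong\underline{\Hom}_A(\Omega^{-(n-1)}\tau^{-1}Y,X)$ is not justified by your remark about (co)syzygies staying in $\add(M)$ --- cosyzygies of objects of $\add(M)$ need not lie in $\add(M)$ for $n\ge 2$, and syzygy functors are not fully faithful on stable categories in general, so this identification again needs the global dimension hypothesis (or should simply be replaced by the dual of the argument for (2)). Likewise, the claim that maps $\tau_n^{-1}Y\to X$ factoring through projectives vanish ``because $\tau^{-1}$ produces no injective summands'' is not a proof; the correct statement is $\Hom_A(\tau_n^{-1}Y,A)=0$, again a consequence of $\gldim A\le n$. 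In short, your outline parallels the paper's two-stage route (stable duality, then removal of the stabilization), but the second stage as you justify it would fail; it must rest on the global dimension bound as in the cited lemmas of Iyama.
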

\begin{proof}
We show (1), the proof of (2) is dual. 
We have in general that $\underline{\Hom}_A(\tau_n^{-1}(Y),X)=D \Ext_A^n(X,Y)$ for any $X,Y \in \add(M)$ for a cluster tilting module $M$ (without the assumption that $A$ has global dimension at most $n$), by \cite[Theorem 2.3.1]{Iya1}. 
Now the assumption that $A$ has additionally global dimension at most $n$ gives us that $\underline{\Hom}_A(\tau_n^{-1}(Y),X)=\Hom_A(\tau_n^{-1}(Y),X)$ by \cite[Lemma 2.4(d)]{Iya2}.
\end{proof}

\begin{lemma}
Let $N$ be an $A$-module of finite projective dimension.
Then $\pdim N= \sup \{ i \geq 0 \mid \Ext_A^i(N,A) \neq 0 \}$.
\end{lemma}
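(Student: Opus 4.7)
Let $d := \pdim N$. The plan is to prove the two inequalities in $d = \sup\{i \geq 0 \mid \Ext_A^i(N,A) \neq 0\}$ separately. That $\Ext_A^i(N,A) = 0$ for all $i > d$ is immediate: any projective resolution of $N$ can be truncated to have length $d$, and hence computes $\Ext_A^i(N,A)$ as zero for $i > d$.

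For the reverse inequality, I would fix a \emph{minimal} projective resolution
$$0 \to P_d \xrightarrow{f} P_{d-1} \to \cdots \to P_0 \to N \to 0,$$
where minimality guarantees both $P_d \neq 0$ and $\operatorname{im}(f) = \Omega^d(N) \subseteq \rad(P_{d-1})$. Applying $\Hom_A(-,A)$ shows $\Ext_A^d(N,A) = \operatorname{coker}(f^*)$, where $f^* : \Hom_A(P_{d-1},A) \to \Hom_A(P_d,A)$, so it suffices to show that $f^*$ is not surjective. The case $d = 0$ is immediate and separate: $N$ is then a nonzero finitely generated projective right module, and $\Hom_A(N,A) \neq 0$ by the standard duality between finitely generated projective right and left $A$-modules.

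Assume now $d \geq 1$ and suppose for contradiction that $f^*$ is surjective. Since $\Hom_A(P_d,A)$ is a finitely generated projective left $A$-module, the surjection $f^*$ admits a section. Applying $\Hom_A(-,A)$ once more and using the natural isomorphism $P^{**} \cong P$ for finitely generated projectives, this section corresponds to a retraction $r : P_{d-1} \to P_d$ with $r \circ f = \text{id}_{P_d}$. Thus $f$ would be a split monomorphism, exhibiting $P_d$ as a direct summand of $P_{d-1}$ contained in $\rad(P_{d-1})$; Nakayama's lemma then forces $P_d = 0$, contradicting $\pdim N = d$.

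The main obstacle is the splitting argument in the inductive step: one has to invoke the self-duality of finitely generated projectives under $\Hom_A(-,A)$ to convert surjectivity of $f^*$ into a splitting of $f$. Once that is in place, the conclusion follows cleanly from Nakayama's lemma together with the minimality of the resolution.
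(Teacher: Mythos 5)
Your proof is correct, but note that the paper itself gives no argument here: it simply cites \cite[Lemma VI.5.5]{ARS}, so there is no ``paper proof'' to match. Your route is a legitimate self-contained one: truncation gives vanishing above $d=\pdim N$, and for non-vanishing of $\Ext_A^d(N,A)$ you dualize a hypothetical splitting of $f^*\colon \Hom_A(P_{d-1},A)\to\Hom_A(P_d,A)$ (the section exists because $\Hom_A(P_d,A)$ is a projective left module), use naturality of $P\cong P^{**}$ to split $f$ itself, and then kill $P_d$ with Nakayama since $\operatorname{im} f\subseteq\rad P_{d-1}$ -- all of these steps are sound, including the separate $d=0$ case. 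For comparison, the standard textbook argument is slightly shorter and avoids both the double-dual naturality and Nakayama: by dimension shifting $\Ext_A^d(N,A)\cong\Ext_A^1(\Omega^{d-1}N,A)$, and since $\pdim \Omega^{d-1}N=1$ the exact sequence $0\to\Omega^dN\to P_{d-1}\to\Omega^{d-1}N\to 0$ cannot split, so $\Ext_A^1(\Omega^{d-1}N,\Omega^dN)\neq 0$; as $\Omega^dN$ is a summand of some $A^m$, this forces $\Ext_A^1(\Omega^{d-1}N,A)\neq 0$. Your version buys essentially the same conclusion from the duality of finitely generated projectives, at the cost of a bit more bookkeeping; either is acceptable, though in the paper's context the efficient thing is simply the citation.
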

\begin{proof}
See for example \cite[Lemma VI.5.5]{ARS}.
\end{proof}

Given this lemma, it follows from the definition of $n$-representation finite algebra
that the
summands of $M$ are either projective or of projective
dimension $n$.

\begin{lemma} \label{big} Let $A$ be an $n$-representation finite algebra with $n$-cluster-tilting module $M$ and $B:=\End_A(M)$.
  \begin{enumerate}
    \item \label{point-one} Let $N$ be an indecomposable projective summand of $M$. We have $T_N \cong L_{\nu_A(N)}$ and thus $T_N$ is projective as a $B$-module.
  \item
Let $N$ be an indecomposable non-projective
summand of $M$. Suppose that its minimal projective resolution is
$$0\rightarrow P_n \rightarrow \cdots \rightarrow P_1 \rightarrow P_0 \rightarrow N \rightarrow 0.$$
Then the projective resolution as a $B$-module of $T_N$ is given by:
\begin{align}\label{Bres} 0 \rightarrow L_{\tau_n(N)} \rightarrow L_{\nu_A(P_n)} \rightarrow \cdots \rightarrow L_{\nu_A(P_0)} \rightarrow T_N \rightarrow 0.\end{align}
This then also gives an injective coresolution 
$$0 \rightarrow L_{\tau_n(N)} \rightarrow T_{P_n} \rightarrow \cdots \rightarrow T_{P_0} \rightarrow T_N \rightarrow 0$$ of $L_{\tau_n(N)}$ with $T_{P_i} \cong L_{\nu_A(P_i)}$.
\item Let $N'$ be an injective indecomposable direct summand of $M$. Then
  $L_{N'}$ is itself injective, so it is its own injective resolution.
\item \label{bij} There is a bijection between the indecomposable projective $B$-modules of injective dimension $n+1$ and the indecomposable injective $B$-modules of projective dimension $n+1$ given by $\Omega^{-(n+1)}$ with inverse $\Omega^{n+1}$.
  \end{enumerate}
\end{lemma}

  \begin{proof}\begin{enumerate}
      \item Since $N$ is projective, $T_N=D \Hom_A(N,M) \cong \Hom_A(M, \nu_A(N)) = L_{\nu_A(N)}$ by \cite[Chapter III, Corollary 6.2]{SkoYam}, so $T_N$ is itself projective.
      
      \item Since $N$ is non-projective, we have already established that
        $\pdim N=n$. 
In order to calculate $\Ext^i(N,M)$, we would apply $\Hom(-,M)$ to the projective resolution of $N$, and since
$\Ext^i(N,M)=0$ except for $i=0$ and $i=n$, we obtain an exact sequence:
\begin{align} \label{exactsequence}
0 \rightarrow \Hom_A(N,M) \rightarrow \Hom_A(P_0,M) \rightarrow \Hom_A(P_1,M) \rightarrow \cdots \rightarrow \Hom_A(P_n,M) \rightarrow \Ext_A^n(N,M) \rightarrow 0.
\end{align}
Now by the higher Auslander--Reiten formulas in Lemma \ref{higherausreiformulas}, we have
$\Ext_A^n(N,M) \cong D \Hom_A(M, \tau_n(N))$. Note also that
$D\Hom_A(N,M) =: T_N$. Thus, applying the duality $D$ to (\ref{exactsequence}) we obtain the exactness of (\ref{Bres}). It is a projective resolution
since the terms $D\Hom_A(P_i,M) \cong L_{\nu_A(P_i)}$ are projective-injective by (1), which also shows that the second exact sequence is an injective coresolution of $L_{\tau_n(N)}$.

\item This is dual to (\ref{point-one}).

\item The indecomposable projective $B$-modules which are not injective are the
  modules $L_N$ for $N$ an indecomposable direct summand of $M$ which is not
  injective. By (\ref{Bres}), we see that $\Omega^{-(n+1)} L_N=T_{\tau_n(N)}$.
  The modules of the form $T_{\tau_n(N)}$ are exactly the indecomposable
  injective $B$-modules which are not projective, and (\ref{Bres}) shows that
  $\Omega^{n+1} T_{\tau_n(N)}=L_N$.
  \end{enumerate}
  \end{proof}

In the next theorem we calculate the Coxeter transformation of higher Auslander algebras coming from $n$-cluster tilting modules in $n$-representation-finite algebras.
\begin{theorem} \label{coxetercalc}
Let $A$ be a n-representation-finite algebra with $n$-cluster-tilting module $M$, let $B=\End_A(M)$ and let $N$ be an indecomposable $A$-module in $\add(M)$.
\begin{enumerate}
\item If $N$ is projective, $C_B([L_N])= - [L_{\nu_A(N)}]$.
\item If $N$ is non-projective, let $0 \rightarrow P_n\rightarrow \cdots \rightarrow P_1 \rightarrow P_0 \rightarrow N \rightarrow 0$ be a minimal projective resolution of $N$.
Then $C_B([L_N])= \sum\limits_{i=0}^{n}{(-1)^{i+1} [L_{\nu_A(P_i)}]}+(-1)^n[L_{\tau_n(N)}]$.

\end{enumerate}

\end{theorem}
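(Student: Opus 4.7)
The plan is to compute $C([L_N])$ directly from the definition $C_B([P])=-[\nu_B(P)]$, replacing $\nu_B(P)$ by the alternating sum of its projective resolution in $K_0(B)$. The two cases correspond exactly to whether $L_N$ remains projective-injective under $\nu_B$ (i.e.\ whether $N$ is projective as an $A$-module) or not, and the needed projective resolutions have already been written down in Lemma \ref{big}.

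For part (1), if $N$ is projective in $\mod A$, then $L_N$ is projective-injective in $\mod B$, so $\nu_B(L_N)$ is itself projective. A direct adjunction computation gives
\[
\nu_B(L_N)=D\Hom_B(L_N,B)=D\Hom_B\bigl(\Hom_A(M,N),\Hom_A(M,M)\bigr)\cong D\Hom_A(N,M)=T_N,
\]
and Lemma \ref{big}(\ref{point-one}) identifies $T_N$ with $L_{\nu(N)}$. Hence $C([L_N])=-[L_{\nu(N)}]$.

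For part (2), if $N$ is non-projective, then $\nu_B(L_N)=T_N$ is no longer projective, so I would pass to $K_0(B)$ via a projective resolution. Lemma \ref{big}(2) supplies precisely this: starting from a minimal projective resolution $0\to P_n\to\cdots\to P_0\to N\to 0$, the module $T_N$ has the projective resolution
\[
0\to \Hom_A(M,\tau_n(N))\to D\Hom_A(P_n,M)\to\cdots\to D\Hom_A(P_0,M)\to T_N\to 0.
\]
I would identify $\Hom_A(M,\tau_n(N))=L_{\tau_n(N)}$ and, using Lemma \ref{big}(\ref{point-one}) applied to each projective $P_i$, $D\Hom_A(P_i,M)=T_{P_i}=L_{\nu(P_i)}$. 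Taking the alternating sum (the leftmost term is in homological degree $n+1$) yields
\[
[T_N]=\sum_{i=0}^{n}(-1)^i[L_{\nu(P_i)}]+(-1)^{n+1}[L_{\tau_n(N)}],
\]
and multiplying by $-1$ produces the claimed formula $C([L_N])=\sum_{i=0}^{n}(-1)^{i+1}[L_{\nu(P_i)}]+(-1)^n[L_{\tau_n(N)}]$.

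There is no real obstacle here: the theorem is essentially a bookkeeping consequence of Lemma \ref{big}. The only subtle point is to be careful with the sign conventions, i.e.\ to make sure that the index of the leftmost term in the resolution of $T_N$ is counted correctly so that the sign on $L_{\tau_n(N)}$ in $C([L_N])$ comes out as $(-1)^n$ rather than $(-1)^{n+1}$.
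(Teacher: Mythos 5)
Your argument is correct and is essentially the paper's own proof: both compute $C_B([L_N])=-[\nu_B(L_N)]=-[T_N]$ via the adjunction isomorphism $\nu_B(L_N)\cong D\Hom_A(N,M)=T_N$ and then read $[T_N]$ off as the alternating sum of the projective resolutions supplied by Lemma \ref{big}, with the same identification of the terms $D\Hom_A(P_i,M)=L_{\nu(P_i)}$ and $\Hom_A(M,\tau_n(N))=L_{\tau_n(N)}$ and the same sign bookkeeping. One harmless slip in an aside: $L_N$ is projective-injective when $N$ is \emph{injective}, not when $N$ is projective (for $N$ projective it is $T_N=\nu_B(L_N)\cong L_{\nu(N)}$ that is projective-injective), but the fact you actually use is exactly this latter statement, which you justify correctly via Lemma \ref{big}(\ref{point-one}).
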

\begin{proof}
  By definition $C_B([L_N])= -[T_N]$. To express $[T_N]$ with respect to the basis of
  indecomposable projective $B$-modules, it suffices to calculate a projective
  resolution of $T_N$ and take the alternating sum. We have
  found these projective resolutions in Lemma \ref{big}, and the claim follows.
  \end{proof}

Recall that we defined the grade bijection in the Grothendieck group of an Auslander regular algebra $B$ on the indecomposable projective $A$-modules by $R_B([P_S])=[P_{\Gr_B(S)}]$, where $P_S$ denotes the projective cover of a simple module $S$ and $\Gr_B$ the grade bijection on simple modules.
We now calculate the grade bijection for higher Auslander algebras that are endomorphism rings of an $n$-cluster tilting module.

\begin{theorem}
Let $A$ be an algebra with $n$-cluster tilting module $M$ with $B:=\End_A(M)$ and let $N$ be an indecomposable module in $\add(M)$.
Then the grade bijection $R_B$ for $B$ is given by $R_B([L_N])=[L_{\nu_A(N)}]$ when $N$ is projective and $R_B([L_N])=[L_{\tau_n(N)}]$ else.

\end{theorem}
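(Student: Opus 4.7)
The approach is to invoke the defining formula $R(S_N) = \top(D\Ext^{g_{S_N}}_B(S_N,B))$ from Theorem \ref{graderowmotiontheorem}, using Proposition \ref{propograde} to split into the two cases $g_{S_N}=0$ (when $N$ is projective) and $g_{S_N}=n+1$ (when $N$ is non-projective), and in each case to identify the right-hand side explicitly.

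In the projective case, the canonical surjection $L_N \twoheadrightarrow S_N$ of $B$-modules induces, upon applying $\Hom_B(-,B)$ and then the duality $D$, a surjection from $D\Hom_B(L_N,B) = \nu_B(L_N) = T_N$ onto $D\Hom_B(S_N,B)$. By Lemma \ref{big}(1), $T_N \cong L_{\nu(N)}$, which is indecomposable projective with simple top $S_{\nu(N)}$. The grade condition $g_{S_N}=0$ forces $D\Hom_B(S_N,B)\neq 0$, and any nonzero quotient of a module with simple top has the same top, so $R(S_N)=S_{\nu(N)}$.

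For the non-projective case the plan is to exhibit an identification $D\Ext^{n+1}_B(S_N,B) \cong S_{\tau_n(N)}$, realizing it as the socle of $T_{\tau_n(N)}$. The crucial observation is that one and the same $n$-almost split sequence $0 \to \tau_n(N) \to E_{n-1} \to \cdots \to E_0 \to N \to 0$ produces both relevant resolutions: applying $\Hom_A(M,-)$ gives the minimal projective resolution (\ref{SPres}) of $S_N$, while applying $D\Hom_A(-,M)$ gives the minimal injective coresolution (\ref{SIres}) of $S_{\tau_n(N)}$. The latter application requires that $\tau_n(N)$ be a non-injective summand of $M$, which holds since in any $n$-representation-finite algebra $\tau_n$ restricts to a bijection between non-projective and non-injective indecomposable summands of $M$. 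Now apply $\nu_B = D\Hom_B(-,B)$ termwise to (\ref{SPres}); using the Yoneda-type identification $\nu_B(\Hom_A(M,X)) = D\Hom_A(X,M) = T_X$, this reproduces the injective coresolution (\ref{SIres}) with its first term $S_{\tau_n(N)}$ stripped off. Since $D$ is exact, the cohomology of this $\nu_B$-complex is $D\Ext^\bullet_B(S_N,B)$, which by Proposition \ref{propograde} and the bound $\gldim B = n+1$ is concentrated in degree $n+1$; the value there is $\ker(T_{\tau_n(N)} \to T_{E_{n-1}}) = S_{\tau_n(N)}$, by exactness of the injective coresolution. As $S_{\tau_n(N)}$ is already simple, its top is itself, and we conclude $R(S_N)=S_{\tau_n(N)}$.

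The main obstacle to watch for is ensuring the differentials match up, not merely the objects: one must verify that the maps in the $\nu_B$-image of the projective resolution agree with the maps in the injective coresolution. This reduces to the functoriality of both constructions with respect to morphisms in the underlying $n$-almost split sequence, together with the equality $\nu_B\circ \Hom_A(M,-) = D\Hom_A(-,M)$ as functors on $\add(M)$, so it is ultimately formal.
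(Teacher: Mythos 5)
Your proof is correct, and the projective case is essentially identical to the paper's argument (surjection $T_N=L_{\nu(N)}\twoheadrightarrow D\Hom_B(S_N,B)$ plus the simple-top observation). In the non-projective case you take a slightly different, and in fact more streamlined, route: the paper first invokes Lemma \ref{big}(\ref{bij}) to get $\dim_k\Ext_B^{n+1}(S_N,B)=1$, then applies $D\Hom_B(-,B)$ to (\ref{SPres}) to exhibit an injective resolution of $D\Ext_B^{n+1}(S_N,B)$ beginning with $T_{\tau_n(N)}$, reads off that the socle is $S_{\tau_n(N)}$, and uses one-dimensionality to conclude top $=$ socle; you instead identify $D\Ext_B^{n+1}(S_N,B)\cong S_{\tau_n(N)}$ outright, by observing that $\nu_B$ applied to (\ref{SPres}) reproduces (\ref{SIres}) for $S_{\tau_n(N)}$ with its first term stripped off, both resolutions coming from the same $n$-almost split sequence. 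This bypasses the multiplicity-one argument and the top-versus-socle discussion entirely, at the cost of having to check that the differentials agree, which, as you note, follows from the natural isomorphism $D\Hom_B(\Hom_A(M,-),B)\cong D\Hom_A(-,M)$ on $\add(M)$ --- the same identification already used in the proof of Proposition \ref{propograde}, so your argument is really a direct continuation of that computation. Two minor remarks: the statement assumes only that $M$ is $n$-cluster tilting, not that $A$ is $n$-representation-finite, so you should justify that $\tau_n(N)$ is a non-injective summand of $M$ with $\tau_n^{-1}\tau_n(N)=N$ by the general higher Auslander--Reiten theory for $n$-cluster tilting modules (this is standard and is exactly what makes the two resolutions come from one sequence), rather than by appealing to $n$-representation-finiteness; and the appeal to $\gldim B=n+1$ is unnecessary, since $\pdim S_N=n+1$ already confines the cohomology.
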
 
\begin{proof}
Assume first that $N$ is projective. $S_N$ injects into $T_N \cong L_{\nu_A(N)}$, which is a summand of $B$, hence $\Hom_B(S_N,B) \neq 0$ and thus $\gr S_N =0$. Applying $D \Hom_B(-,B)$ on $L_N \rightarrow S_N \rightarrow 0$ one gets $T_N \rightarrow D \Hom_B(S_N,B) \rightarrow 0$, and since $T_N$ is projective and the middle term does not vanish, we have $\top(D \Hom_B(S_N,B)) = \top(T_N) = \top(L_{\nu_A(N)})=S_{\nu_A(N)}.$ Thus $R_B([L_N])= [L_{\nu_A(N)}]$. \newline

Assume now that $N$ is not projective. We will show that 
$D \Ext_B^i(S_N,B)= S_{\tau_n(N)}$, if $i=n+1$ and $D \Ext_B^i(S_N,B)= 0$, else.
This will show that $\gr S_N=n+1$ and $R_B([L_N])=[L_{\tau_n(N)}]$.
By the projective resolution of $S_N$ given in  (\ref{SPres}), $D \Ext_B^i(S_N,B)$ is the $i$-th homology of the complex obtained from 
$$0 \rightarrow L_{\tau_n(N)} \rightarrow \cdots L_N \rightarrow 0$$
by applying $D \Hom_B(-,B)$. This complex equals 
$$0 \rightarrow T_{\tau_n(N)} \rightarrow \cdots \rightarrow T_N \rightarrow 0,$$
which by (\ref{SIres}) for $\tau_n(N)$ is exact except at the $(n+1)$-th degree, where the homology equals $S_{\tau_n(N)}$.
\end{proof}

The last theorem tells us how to calculate the grade bijection. In the next theorem we will also use the inverse of the grade bijection, which is then given by $R_B^{-1}([L_N])=[L_{\nu_A^{-1}(N)}]$ if $N$ is injective and $R_B^{-1}([L_N])=[L_{\tau_n^{-1}(N)}]$ otherwise.
\begin{theorem} \label{maintheoremstrongnrepfin}
Let $A$ be an $n$-representation-finite algebra with $n$-cluster-tilting module $M$ and $B:=\End_A(M)$.
Then we have $(C_B R_B^{-1})^2=id $ if $n$ is even and $(C_B R_B^{-1}+id)^2=0$ when $n$ is odd.

\end{theorem}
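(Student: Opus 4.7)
The plan is to compute $C_B R_B^{-1}$ directly on the basis of $K_0(B)$ given by $\{[L_N]\}$ with $N$ running over the indecomposable summands of $M$, using the formula for $R_B$ from the previous theorem and the formula for $C_B$ from Theorem~\ref{coxetercalc}. The decisive observation is that the summands of $M$ split into the injective ones and the non-injective ones, and $C_B R_B^{-1}$ is block upper-triangular with respect to this splitting with scalar diagonal blocks; both conclusions will then follow from a single $2 \times 2$ block matrix computation.

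First I invert the grade bijection. For $N$ an injective summand of $M$, the module $\nu^{-1}(N)$ is a projective summand of $M$, so $R_B^{-1}([L_N])=[L_{\nu^{-1}(N)}]$ and Theorem~\ref{coxetercalc}(1) gives
\[
C_B R_B^{-1}([L_N]) \;=\; -[L_{\nu(\nu^{-1}(N))}] \;=\; -[L_N].
\]
For $N$ a non-injective summand of $M$, $\tau_n^{-1}(N)$ is a non-projective summand, so $R_B^{-1}([L_N])=[L_{\tau_n^{-1}(N)}]$; letting $0 \to P_n \to \cdots \to P_0 \to \tau_n^{-1}(N) \to 0$ be a minimal projective resolution, Theorem~\ref{coxetercalc}(2) together with $\tau_n(\tau_n^{-1}(N))=N$ yields
\[
C_B R_B^{-1}([L_N]) \;=\; (-1)^n [L_N] \;+\; \sum_{i=0}^{n} (-1)^{i+1}[L_{\nu(P_i)}].
\]
The crucial point is that each $\nu(P_i)$ is an injective $A$-module and hence an injective summand of $M$, so the entire correction term lies in the span of the basis vectors indexed by injective summands.

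Ordering the basis with injective summands first, the matrix of $C_B R_B^{-1}$ thus takes the block upper-triangular form
\[
\begin{pmatrix} -\mathrm{id} & X \\ 0 & (-1)^n \mathrm{id} \end{pmatrix}
\]
for some matrix $X$. If $n$ is even, squaring produces $\mathrm{id}$ on the diagonal and $(-1)X + X = 0$ off the diagonal, giving $(C_B R_B^{-1})^2 = \mathrm{id}$. If $n$ is odd, both diagonal blocks of $C_B R_B^{-1}+\mathrm{id}$ vanish, so $C_B R_B^{-1}+\mathrm{id}$ is strictly block upper-triangular and its square is zero. The only potential pitfall is bookkeeping: one needs that $\nu$ restricts to a bijection between projective and injective indecomposable summands of $M$, and that $\tau_n$ restricts to a bijection between non-projective and non-injective indecomposable summands, both of which are standard in the $n$-cluster-tilting setup. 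I do not foresee a serious obstacle beyond this.
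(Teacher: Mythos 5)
Your proposal is correct and follows essentially the same route as the paper: invert the grade bijection on the injective/non-injective summands of $M$, apply Theorem~\ref{coxetercalc} to get $C_B R_B^{-1}([L_N])=-[L_N]$ for $N$ injective and $(-1)^n[L_N]+\sum_i(-1)^{i+1}[L_{\nu(P_i)}]$ otherwise, and exploit that the correction terms are supported on injectives. Your block upper-triangular packaging of the final squaring step is just a tidier way of writing the paper's explicit term-by-term cancellation, so there is nothing substantive to add.
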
 
\begin{proof}
If $N$ is an injective indecomposable summand of $M$ then we have $C_B R_B^{-1}([L_N])=C_B([L_{\nu_A^{-1}(N)}])=-[L_N]$, since $\nu_A^{-1}(N)$ is projective.
If $N$ is an indecomposable and non-injective summand of $M$ we have:
$C_B R_B^{-1}([L_N])=C_B([L_{\tau_n^{-1}(N)}])= \sum\limits_{i=0}^{n}{(-1)^{i+1} [L_{\nu_A(P_i)}]}+(-1)^n[L_N]$, where $$0 \rightarrow P_n \rightarrow \cdots \rightarrow P_1 \rightarrow P_0 \rightarrow \tau_n^{-1}(N) \rightarrow 0$$ denotes a minimal projective resolution of $\tau_n^{-1}(N)$.

 It follows that with respect to the basis of the $[L_N]$, ordered such that the elements corresponding to the injective modules $N$ precede all the others, the matrix of $C_BR_B^{-1}$ has the block form
%\begin{huge}
$$\left(\begin{array}{cc}
-I_s & E \\
0 & (-1)^nI_t
\end{array}\right)$$
%\end{huge}
for some matrix $E$, where $I_r$ denotes the $r \times r$ identity matrix, $s$ is the number of injective indecomposable modules $N$ in $\add(M)$, and $t$ is the number of the non-injective such modules. 
If $n$ is even then the square of such a matrix is the identity $I_{s+t}$, whereas if $n$ is odd then by adding the identity $I_{s+t}$ one gets the matrix
$$\left(\begin{array}{cc}
0 & E\\
0 & 0
\end{array}\right)$$
whose square is zero.
\end{proof}

Specialising to path algebras of Dynkin type gives the following special case:

\begin{example}
Let $A=KQ$ be a path algebra of Dynkin type and $M$ the direct sum of all indecomposable $A$-modules, which is a 1-cluster tilting module. Let $B:=\End_A(M)$, which is the Auslander algebra of $A$.
By Theorem \ref{maintheoremstrongnrepfin}, we have $(C_B R_B^{-1}+id)^2=0$.

\end{example}

\subsection*{Acknowledgements}

%\emine{remember to thank referee} -done.

The authors would like to express their gratitude
to Sam Hopkins for communicating his conjecture
to us. This project began at a workshop on Dynamical Algebraic Combinatorics
hosted (online) by the Banff International
Research Station in 2020.
The authors are grateful to BIRS and to the organizers of
the workshop, who provided the opportunity for the project to begin. We thank the referee for their detailed comments and suggestions which significantly improved the exposition of the paper. In particular, we followed the suggestion of the referee to include a shorter proof for theorems 4.6 and 4.7.

Ren\'e Marczinzik is funded by the DFG with the project number 428999796.
Hugh Thomas is partially supported by an NSERC Discover Grant RGPIN-2016-04872
and the Canada Research Chairs program, grant number CRC-2021-00120.
Part of the work on this paper was undertaken during Emine Y\i{}ld\i{}r\i{}m's tenure as INI-Simons Post Doctoral Research Fellow hosted by the Isaac Newton Institute for Mathematical Sciences (INI) participating in programme `Cluster algebras and representation theory', and by the Department of Pure Mathematics and Mathematical Statistics (DPMMS) at the University of Cambridge.   This author would like to thank INI and DPMMS for support and hospitality during this fellowship, which was supported by Simons Foundation (Award ID 316017) and by EPSRC (grant number EP/R014604/1).

\end{document}